\documentclass[10pt, a4paper]{amsart}
\usepackage{amsfonts}
\usepackage{amsthm}
\usepackage{amsmath}
\usepackage{latexsym}
\usepackage{amssymb}
\usepackage{graphicx}
\usepackage{url}

\newtheorem{lemma}{Lemma}[section]
\newtheorem{proposition}[lemma]{Proposition}

\theoremstyle{remark}
\newtheorem{remark}[lemma]{Remark}

\numberwithin{equation}{section}

\newcommand{\Q}{\mathbb{Q}}
\newcommand{\C}{\mathbb{C}}
\newcommand{\Z}{\mathbb{Z}}
\newcommand{\R}{\mathbb{R}}

\newcommand{\g}{\mathop{\mathfrak{g}}}
\newcommand{\h}{\mathop{\mathfrak{h}}}
\newcommand{\p}{\mathop{\mathfrak{p}}}
\newcommand{\z}{\mathop{\mathfrak{z}}}

\newcommand{\Ad}{\mathop{\mathrm{Ad}}}

\newcommand{\ad}{\mathrm{\mathop{ad}}}

\newcommand{\Tr}{\mathrm{\mathop{Tr}}}
\newcommand{\mf}[1]{\mathfrak{#1}}
\newcommand{\ssl}{\mathfrak{\mathop{sl}}}
\newcommand{\myl}{\mathfrak{l}}
\newcommand{\hh}{\mathfrak{h}}

\newcommand{\GAP}{{\sf GAP}}

\begin{document}

\title{Exploring Lie theory with {\sf GAP}}
\author{Willem A. de Graaf}
\address{
Dipartimento di Matematica\\
Universit\`{a} di Trento\\
Italy}
\email{willem.degraaf@unitn.it}
\thanks{The author was partially supported by an Australian Research Council
grant, identifier DP190100317.}

\subjclass{17B45, 20G05}

\keywords{Lie groups, Lie algebras, nilpotent orbits, computational methods}

\begin{abstract}
We illustrate the Lie theoretic capabilities of the computational algebra
system {\GAP}4 by reporting on results on nilpotent orbits of simple Lie
algebras that have been obtained using computations in that system.
Concerning reachable elements in simple Lie algebras we show by computational
means that the simple Lie algebras of exceptional type have the Panyushev
property.  We computationally prove two propositions on the dimension
of the abelianization of the centralizer of a nilpotent element in simple  Lie
algebras of exceptional type. Finally we obtain the closure ordering of the
orbits in the null cone of the spinor representation of the group
$\mathrm{Spin}_{13}(\C)$. All input and output of the relevant \GAP\ sessions
is given. 
\end{abstract}

\maketitle

\section{Introduction}

This paper has two purposes. Firstly, it serves to introduce and advertise
the capabilities of the computer algebra system {\GAP}4 \cite{GAP4} to perform
computations related to various aspects of Lie theory. The main objects
related to Lie theory that {\GAP} can deal with directly are Lie algebras and
related finite structures such as root systems and Weyl groups. 
But Lie algebras play 
an important role in the study of the structure and representations of
linear algebraic groups. So also the algorithms implemented in \GAP\
can also be used to perform computations regarding those objects.

The second purpose of the paper is to describe the results of three
computational projects that I have been involved in. The first of these
is the subject of Section \ref{sec:reach} and concerns reachable nilpotent
orbits in Lie algebras of exceptional type. Let $\g$ be a semisimple complex Lie
algebra and let $e\in \g$ be nilpotent. By $\g_e$ we denote the centralizer of
$e$ in $\g$. The element $e$ is said to be {\em reachable} if $e\in [\g_e,
\g_e]$. A nilpotent $e$ lies in a so-called $\ssl_2$-triple, which defines
a grading on $\g$. Panyushev \cite{panyushev4} proposed a characterization of
reachable nilpotent elements in terms of this grading; here we call this the
Panyushev property of $\g$. In \cite{panyushev4} this property was proved for
Lie algebras of type $A$. Yakimova \cite{yakimova2} showed that the Lie
algebras of type $B$, $C$, $D$ also have the Panyushev property. In Section
\ref{sec:reach} we show by calculations in \GAP\ that the simple Lie algebras
of exceptional type also have the Panyushev property.

The second project concerns the quotients $\g_e/[\g_e,\g_e]$ where again $e$ is
a nilpotent element in a simple complex Lie algebra $\g$. These play an
important role in \cite{pretop}. In Section \ref{sec:Qe} we show that a
statement proved in
\cite{pretop} for the simple Lie algebras of classical type also holds
for the exceptional types, albeit with a few explicitly listed exceptions.
The results of Sections \ref{sec:reach}, \ref{sec:Qe} have also appeared in the
arxiv preprint \cite{slapaper}, without giving the details of the
computations.

In Section \ref{sec:clos0} we look at the null cone of the spinor representation
of the group $\mathrm{Spin}_{13}(\C)$. The orbits of this group in the null
cone were first listed in \cite{gavin}. A' Campo and Popov
\cite[Example (f), p. 348]{dekem3} observed, also by computational means, that
these orbits coincide with the strata of the null cone (and they corrected
the dimensions given in \cite{gavin}). Here we show how an algorithm given
in \cite{gravinya} can be extended to this case to obtain the closure
ordering of these orbits. We give a simple implementation in \GAP\ and
obtain the closure diagram. Furthermore, we use \GAP\ to study the stabilizers
of the elements of the null cone.

In this paper we will not give a full introduction into working with Lie
algebras in \GAP\ but refer to the reference manual of \GAP\ which can be
found on its website, and to the manuals of the various packages that are
listed in the next section. The website of \GAP\ also has various introductory
materials of a more general nature. The topics that we discuss in this paper all
involve semisimple Lie algebras. For a general introduction to the theory
of these algebras we refer to the book by Humphreys, \cite{hum}.

We will give all input and output of the \GAP\ sessions. Most commands return
very quickly. If a command takes markedly longer then we display the runtime,
by using the \GAP\ function \verb+time+; this command displays the runtime
in milliseconds, so that a value of, for example, \verb+23345+ means 23.3
seconds. 

\noindent{\bf Acknowledgements.} I thank Alexander Elashvili for suggesting the
topic of Section \ref{sec:reach} and Alexander Premet for suggesting the
computations reported on in Section \ref{sec:Qe}. I thank the anonymous referee
for many comments which helped to improve the exposition of the paper.

\section{Preliminaries}

{\GAP}4 \cite{GAP4} is an open source computational algebra system.
Its mathematical functionality is contained in a ``core system''
(which consists of a small kernel written in C and a library of functions
written in the \GAP\ language) and a rather large number of packages which
can be loaded separately. The \GAP\ library has a number of functions
for constructing and working with Lie algebras and their representations.
For an overview we refer to the reference manual of \GAP. Furthermore
there are the following packages that deal with various aspects of Lie theory:
\begin{itemize}
\item {\sf CoReLG} \cite{corelg}, for working with real semisimple Lie algebras.
\item {\sf FPLSA} \cite{fplsa}, for dealing with finitely presented Lie
  algebras.
\item {\sf LieAlgDB} \cite{liealgdb}, which contains various databases of
  small dimensional Lie algebras.
\item {\sf LiePRing}, \cite{liepring} containing a database and algorithms
  for Lie $p$-rings.
\item {\sf LieRing} \cite{liering}, for computing with Lie rings.
\item {\sf NoCK} \cite{nock}, for the computation of Tolzano’s obstruction
  for compact Clifford-Klein forms.
\item {\sf QuaGroup} \cite{quagroup}, for computations with quantum groups.
\item {\sf SLA} \cite{sla}, for computations with various aspects of semisimple
  Lie algebras.
\item {\sf Sophus} \cite{sophus}, for computations in nilpotent Lie algebras.  
\end{itemize}

We also mention the package {\sf CHEVIE} for dealing with groups of Lie type
and related structures such as Weyl groups and Iwahori-Hecke algebras.
This package is built on \GAP 3, not \GAP 4. We refer to its website
\url{https://webusers.imj-prg.fr/~jean.michel/chevie/chevie.html} for
more information. 

The projects discussed in this paper mainly use the \GAP\ core system and
the package {\sf SLA}. In the next two subsections we briefly look at how
simple Lie algebras and their modules are constructed in \GAP\ and how
{\sf SLA} deals with nilpotent orbits in simple Lie algebras. 

\subsection{Simple Lie algebras in {\sf GAP}}

\GAP\ has a function \verb1SimpleLieAlgebra1 for creating the simple split
Lie algebras over fields of characteristic 0. (The semisimple Lie algebras
can be constructed by the function \verb1DirectSumOfAlgebras1.)
They are given by a
multiplication table with respect to a Chevalley basis (for the latter
concept we refer to \cite[Theorem 25.2]{hum}). For the base field we
usually take the rational numbers because often the computations
with these algebras are entirely rational, that is, require no solutions
to polynomial equations. The $i$-th basis element of such a Lie algebra is
written as \verb1v.i1. In the next example we construct the simple Lie
algebra of type $E_8$, a basis of it and two of its elements.

\begin{verbatim}
gap> L:= SimpleLieAlgebra( "E", 8, Rationals ); 
<Lie algebra of dimension 248 over Rationals>
gap> b:= Basis(L);; b[123];
v.123
gap> b[2]-3*b[5]+1/7*b[100];
v.2+(-3)*v.5+(1/7)*v.100
\end{verbatim}

Such simple Lie algebras come with a lot of data like a Chevalley basis
and a root system. Again we refer to the reference manual for more details. 

There also is a function for constructing the irreducible modules
of a semisimple Lie algebra. Such a module is given by a highest weight,
which is a nonnegative integral linear combination of the fundamental weights.
This linear combination is just given by its coefficient vector.
(The order of the fundamental weights is given by the Cartan matrix of
the root system of the Lie algebra.) The action of an element of the Lie
algebra on an element of its module is computed by the infix caret operator
\verb1^1. In the next example we construct the irreducible 3875-dimensional
module of the Lie algebra of type $E_8$. We see that the computation in
\GAP\ takes about 174 seconds. We also compute the action of an element of
the Lie algebra on an element of the module. 

\begin{verbatim}
gap> L:= SimpleLieAlgebra( "E", 8, Rationals );;
gap> V:= HighestWeightModule( L, [1,0,0,0,0,0,0,0] ); time;
<3875-dimensional left-module over <Lie algebra of dimension 248 
over Rationals>>
174425
gap> bL:= Basis(L);; bV:= Basis(V);;
gap> bL[1]^bV[263];
-1*y112*v0
\end{verbatim}

(For an explanation of the notation of the basis elements of these modules
we again refer to the reference manual.)

\subsection{Nilpotent orbits in {\sf GAP}}\label{sec:gapnilo}

Here we recall some definitions and facts on nilpotent orbits.
For more background information we refer to the the book by Collingwood and
McGovern (\cite{colmcgov}). Secondly we show how the package {\sf SLA}
deals with nilpotent orbits.

Let $\g$ be a semisimple Lie algebra over $\C$ (or over an algebraically
closed field of characteristic 0). Let $G$ denote the adjoint group of $\g$;
this is the identity component of the automorphism group of $\g$. An $e\in\g$
is said to be nilpotent if the adjoint map $\ad e : \g\to\g$ is nilpotent.
If $e\in G$ is nilpotent then the entire orbit $Ge$ consists of nilpotent
elements, and is therefore called a nilpotent orbit. 

By the Jacobson-Morozov theorem a nilpotent $e\in \g$ lies in an
$\ssl_2$-triple $(f,h,e)$ (where $[e,f]=h$, $[h,e]=2e$, $[h,f]=-2f$).
Let $\hh\subset\g$ be a Cartan subalgebra containing $h$. Let $\Phi$ be the root
system of $\g$ with respect to $\hh$. Then there is a basis of simple roots 
$\Delta = \{\alpha_1,\ldots,\alpha_\ell\}$ of $\Phi$, such that $\alpha_i(h) \in \{0,1,2\}$. 
The Dynkin diagram of $\Delta$,
where the node corresponding to $\alpha_i$ is labeled $\alpha_i(h)$, is called a
weighted Dynkin diagram. It uniquely determines the orbit $Ge$.

The nilpotent orbits of the simple Lie algebras have been classified,
see \cite{colmcgov}. In the {\sf SLA} package they can be constructed with the
command \verb1NilpotentOrbits1. The output is a list of objects that
carry some information such as the weighted Dynkin diagram of the orbit and
an $\ssl_2$-triple containing a representative. Here is an example for the
Lie algebra of type $E_7$, where we inspect the weighted Dynkin diagram and
the third element of an $\ssl_2$-triple of the 37-th orbit (that is, a
representative of the nilpotent orbit).

\begin{verbatim}
gap> L:= SimpleLieAlgebra("E",7,Rationals);;
gap> no:= NilpotentOrbits(L);;
gap> Length(no);
44
gap> WeightedDynkinDiagram( no[37] ); 
[ 2, 0, 0, 2, 0, 0, 2 ]
gap> SL2Triple( no[37] )[3];
v.8+v.11+v.13+v.15+v.22+v.23+v.24
\end{verbatim}

Now we briefly describe the concept of {\em induced} nilpotent orbit.

A subalgebra of $\g$ is said to be {\em parabolic} if it contains a Borel
subalgebra (i.e., a maximal solvable subalgebra). Let $\h$ be a fixed
Cartan subalgebra of $\g$. Let $\Phi$ denote the root system of $\g$ with
respect to $\h$, and let $\Delta$ be a fixed set of simple roots.
For a root $\alpha$ we denote the corresponding root space in $\g$ by
$\g_\alpha$. For a subset $\Pi\subset\Delta$ we define $\p_\Pi$ to be the
subalgebra generated by $\h$, $\g_{-\alpha}$ for $\alpha\in\Pi$ and
$\g_\alpha$ for all positive roots $\alpha$. Then $\p_\Pi$ is a parabolic
subalgebra. Furthermore, every parabolic subalgebra is $G$-conjugate to
a subalgebra of the form $\p_\Pi$.

Let $\p=\p_\Pi$ for a subset $\Pi\subset \Delta$. Let $\Psi \subset \Phi$
be the root subsystem that consists of the roots that are linear combinations
of the elements of $\Pi$. Then $\p = \myl\oplus\mf{n}$ where $\myl$ is the
subalgebra spanned by $\h$ and $\g_\alpha$ for $\alpha\in\Psi$. Secondly,
$\mf{n}$ is spanned by $\g_\alpha$ for positive $\alpha$ that do not lie in
$\Psi$. The decomposition $\p = \myl\oplus\mf{n}$ is called the Levi
decomposition of $\p$ and the subalgebra $\myl$ is called a (standard)
Levi subalgebra of $\g$. We observe that $\myl$ is a reductive Lie algebra.
In the sequel nilpotent orbits in Levi subalgebras appear. The definitions of
their properties are the obvious analogues of the definitions concerning
semisimple Lie algebras. 

Now let $\mf{p}\subset \g$ be a parabolic subalgebra, with Levi decomposition
$\mf{p} = \myl\oplus \mf{n}$. Let $L\subset G$ be the
connected subgroup of $G$ with Lie algebra $\myl$. Let $Le'$ be a nilpotent
orbit in $\myl$. Lusztig and Spaltenstein (\cite{lusp}) have shown that there
is a unique nilpotent orbit $Ge\subset \g$ such that
$Ge\cap (Le'\oplus \mf{n})$ is open and nonempty in  $Le'\oplus \mf{n}$.
The orbit $Ge$ is said to be {\em induced} from the orbit $Le'$. Nilpotent
orbits which are not induced are called {\em rigid}.

Let $n$ be a non-negative integer. The irreducible components of the locally
closed set 
$$A^n = \{ x \in \g \mid \dim Gx = n\}$$
are called {\em sheets} of $\g$ (see \cite{borho}, \cite{borhokraft}). A sheet
is $G$-stable and contains a {\em unique} nilpotent orbit. Sheets in general
are not disjoint, and different sheets may contain the same nilpotent orbit.
The sheets of $\g$ are indexed by $G$-classes of pairs $(\myl, Le')$, where
$\myl$ is a Levi subalgebra, and $Le'$ is a rigid nilpotent orbit in $\myl$,
see \cite{borho}. The nilpotent orbit that is contained in the corresponding
sheet is equal to the nilpotent orbit induced from $Le'$. 
The {\em rank} of the sheet corresponding to the pair $(\myl, Le')$ is defined
to be the dimension of the centre of $\myl$. 

In the {\sf SLA} package a sheet is represented by a {\em sheet diagram}.
We first explain how this is defined.
Consider a parabolic subalgebra $\p=\p_\Pi$ with corresponding Levi subalgebra
$\myl$. Let $Le'$ be a rigid nilpotent orbit in $\myl$, then the pair
$(\myl,Le')$ corresponds to a sheet. Now we label the Dynkin diagram of
$\Phi$ in the following way. Write $\Delta = \{\alpha_1,\ldots,\alpha_\ell\}$.
If $\alpha_i\not\in\Pi$ then node $i$ has label 2. The subdiagram consisting
of the nodes $i$ such that $\alpha_i\in \Pi$ is the Dynkin diagram of the
semisimple part of $\myl$. To these nodes we attach the labels of the
weighted Dynkin diagram of $Le'$. It is known that the weighted Dynkin diagram
of a rigid nilpotent orbit only has labels 0,1. So from a sheet diagram we
can identify $\myl$ and $Le'$ and hence the corresponding sheet.

The {\sf SLA} package has a function \verb1InducedNilpotentOrbits1 for
computing the induced nilpotent orbits of a simple Lie algebra. This function
returns a list of records that is in bijection with the sheets of the
Lie algebra. Each record has two components: \verb1norbit1 which is the
nilpotent orbit contained in the sheet, and \verb1sheetdiag1 which is the
list of labels of the sheet diagram of the sheet. Here is an example for
the simple Lie algebra of type $E_7$. 

\begin{verbatim}
gap> L:= SimpleLieAlgebra( "E", 7, Rationals );;
gap> ind:= InducedNilpotentOrbits( L );;
gap> Length( ind );
46
gap> ind[34];
rec( norbit := <nilpotent orbit in Lie algebra of type E7>, 
  sheetdiag := [ 2, 0, 0, 1, 0, 2, 2 ] )
gap> WeightedDynkinDiagram( ind[19].norbit );
[ 0, 0, 0, 2, 0, 0, 2 ]
gap> WeightedDynkinDiagram( ind[22].norbit );
[ 0, 0, 0, 2, 0, 0, 2 ]
gap> WeightedDynkinDiagram( ind[34].norbit );
[ 0, 0, 0, 2, 0, 0, 2 ]
\end{verbatim}

The numbering of the nodes of the Dynkin diagram of the Lie algebra
of type $E_7$ follows \cite[\S 11.4]{hum}. Hence the sheet diagram of the
34-th sheet is
$$2~~~~0~~~~\overset{\text{\normalsize 0}}{1}~~~~0~~~~2~~~~2$$
We obtain the Dynkin diagram of the corresponding Levi subalgebra $\myl$
by removing the nodes labeled 2; wee see that its semisimple part is of
type $D_4$. The weighted Dynkin diagram of the rigid nilpotent orbit
in $\myl$ has a 1 on the central node and zeros elsewhere. The rank of the
sheet is the dimension of the centre of $\myl$; this is the number of 2's
in the diagram, and we see that it is 3.

Furthermore we see that sheets 19 and 22 contain the same nilpotent orbit.
By inspection it can be verified that there are no other sheets that contain
this nilpotent orbit. Hence this is a nilpotent orbit lying in three sheets.

\section{Reachable elements}\label{sec:reach}

For $e\in \g$ we denote its centralizer
in $\g$ by $\g_e$. In \cite{panyushev4} an $e$ in $\g$ is defined to be
{\em reachable} if $e \in [\g_e,\g_e]$. Such an element has to be nilpotent.
It is obvious that $e$ is reachable if and only if all elements in its
orbit are reachable. Hence if $e$ is reachable then we also say that its
orbit $Ge$ is reachable.

In \cite{elashgre}, Elashvili and Gr\'elaud listed the reachable orbits in
simple complex Lie algebras $\g$ (in that paper reachable elements are
called {\em compact}, in analogy with \cite{blabry}).
For a given semisimple Lie algebra we can easily obtain this classification
in {\sf GAP}4, using the {\sf SLA} package. Here is an example for the simple
Lie algebra of type $E_6$.

\begin{verbatim}
gap> L:= SimpleLieAlgebra( "E", 6, Rationals );;
gap> nL:= NilpotentOrbits( L );;
gap> reach:= [ ];;
gap> for o in nL do
> e:= SL2Triple( o )[3]; ge:= LieCentralizer( L, Subalgebra(L,[e]) );
> if e in LieDerivedSubalgebra( ge ) then Add( reach, o ); fi;
> od;
gap> Length( reach );
6
gap> WeightedDynkinDiagram( reach[3] );
[ 0, 0, 0, 1, 0, 0 ]
\end{verbatim}

This simple procedure obtains six reachable nilpotent orbits. For each such
orbit we can look at its weighted Dynkin diagram to identify it in the known
lists of nilpotent orbits as in \cite[\S 8.4]{colmcgov}. The third element of
our list of reachable orbits corresponds to the orbit with label $3A_1$ in
the list in \cite[\S 8.4]{colmcgov}. 

Let $e\in\g$ be nilpotent, lying in the $\ssl_2$-triple $(f,h,e)$. The 
subalgebra spanned by $(f,h,e)$ acts on $\g$ (by restricting the adjoint
representation of $\g$). By the representation theory of $\ssl_2(\C)$ 
the eigenvalues of $\ad h$ are integers. Hence we get a grading
$$ \g = \bigoplus_{k\in\Z} \g(k)$$
where $\g(k) = \{ x \in \g\mid [h,x]=kx\}$. Now set $\g(k)_e = \g(k)\cap 
\g_e$, and let $\g(\geq 1)_e$ denote the subalgebra spanned by all
$\g(k)_e$, $k\geq 1$.

Panyushev (\cite{panyushev4}) showed that, for $\g$ of type $A_n$,
$e$ is reachable if and only if $\g(\geq 1)_e$ is generated as Lie algebra
by $\g(1)_e$. Here we call this the {\em Panyushev property} of $\g$.
In \cite{panyushev4} it is stated that this property also holds for the 
other classical types and the question is posed whether it holds for the
exceptional types. In \cite{yakimova2} a proof is given that the Panyushev
property holds in types $B_n$, $C_n$, $D_n$. Computations in {\sf GAP}
show that it also holds for the exceptional types. 

\begin{proposition}\label{prop:pan}
Let $\g$ be a simple Lie algebra of exceptional type. Then $\g$ has the
Panyushev property.  
\end{proposition}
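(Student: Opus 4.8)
The Panyushev property is a statement about the finitely many nilpotent orbits of each exceptional Lie algebra, so the plan is to verify the claimed equivalence orbit by orbit in \GAP. Since reachability and the condition that $\g(\geq 1)_e$ be generated by $\g(1)_e$ both depend only on the orbit $Ge$ and not on the chosen representative $e$, it suffices to loop over the five types $G_2,F_4,E_6,E_7,E_8$, and for each to run over the output of \texttt{NilpotentOrbits}, checking for every orbit that the two Boolean conditions agree. Because the classification of nilpotent orbits is finite and explicit, this reduces the proposition to a finite verification.

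For a fixed orbit I would first obtain an $\ssl_2$-triple $(f,h,e)$ from \texttt{SL2Triple}; this supplies both the nilpotent representative $e$ and the semisimple $h$ defining the grading $\g=\bigoplus_k\g(k)$. The reachability side is tested exactly as in the displayed $E_6$ session: form $\g_e$ with \texttt{LieCentralizer} and test whether $e$ lies in the subalgebra returned by \texttt{LieDerivedSubalgebra}. For the grading side I would compute the eigenspaces $\g(k)$ of $\ad h$ (as the kernels of $\ad h-k$ acting on $\g$, over the finitely many integer eigenvalues), intersect each with $\g_e$ to obtain $\g(k)_e$, and assemble $\g(\geq 1)_e$. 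Because $\g_e$ is a subalgebra and the bracket adds degrees, the subalgebra of $\g$ generated by $\g(1)_e$ is automatically contained in $\g(\geq 1)_e$; hence the generation property holds precisely when the dimension of the generated subalgebra equals $\dim \g(\geq 1)_e$, which is the comparison I would perform.

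The obstacle here is computational scale rather than mathematical difficulty: there is no genuine case analysis, only a finite check. The heaviest case is $E_8$, where $\g$ is $248$-dimensional and carries on the order of seventy nilpotent orbits, and where the costliest step is repeatedly bracketing to close up the subalgebra generated by $\g(1)_e$. To keep the computation tractable I would work over $\Q$ throughout and take $h$ inside a fixed Cartan subalgebra, so that every eigenspace $\g(k)$, and hence every intersection with $\g_e$, is defined over $\Q$; this avoids field extensions and keeps the linear algebra exact. Confirming that the two conditions coincide for every orbit of every exceptional type then establishes the proposition.
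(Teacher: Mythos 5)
Your proposal is correct and takes essentially the same approach as the paper: a finite, orbit-by-orbit verification in \GAP\ over the exceptional types, using \texttt{SL2Triple} to get $e$ and $h$, \texttt{LieCentralizer} and \texttt{LieDerivedSubalgebra} for the reachability test, and a comparison of the subalgebra generated by $\g(1)_e$ with $\g(\geq 1)_e$ for the grading condition. The only (harmless) difference is that the paper first notes the general one-line implication that generation of $\g(\geq 1)_e$ by $\g(1)_e$ forces reachability (since $e\in\g(2)_e$), and therefore runs the computational check only over the reachable orbits rather than comparing both Boolean conditions on every orbit.
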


\begin{proof}
One direction is easily seen to hold in general. Indeed,  
suppose that if $\g(\geq 1)_e$ is generated as Lie algebra by $\g(1)_e$.
Since $e\in \g(2)_e$ it immediately follows that $e$ is reachable.

The converse is shown by case by case computations in {\sf GAP}.
Here we show this for the Lie algebra of type $E_6$. We let {\tt reach} be
the list of reachable nilpotent orbits, as computed above.
\begin{verbatim}
gap> for o in reach do
> e:= SL2Triple( o )[3]; ge:= LieCentralizer( L, Subalgebra(L,[e]) );
> h:= SL2Triple( o )[2]; gr:= SL2Grading( L, h );
> gegeq1:= Intersection( ge, Subspace( L, Concatenation( gr[1] ) ) );
> ge1:= Intersection( ge, Subspace( L, gr[1][1] ) );
> Print( Subalgebra( L, Basis(ge1) ) = gegeq1, " " );
> od;
true true true true true true 
\end{verbatim}
The identifier {\tt gr} contains the grading corresponding to the 
$\ssl_2$-triple. This is a list consisting of three lists. The first of these
has bases of the subspaces $\g(1),\g(2),\ldots$. So $\g(\geq 1)_e$ is the
intersection of $\g_e$ and the subspace spanned by all elements in the union
of the lists in {\tt gr[1]}; this space is assigned to the identifier
\verb1gegeq11. Secondly, $\g(1)_e$ is the intersection of
$\g_e$ and the subspace spanned by the first element of {\tt gr[1]}; this
space is assigned to \verb1ge11. 
The penultimate line instructs {\sf GAP} to print {\tt true} if the
subalgebra generated by $\g(1)_e$ is equal to $\g(\geq 1)_e$. 
\end{proof}

Yakimova (\cite{yakimova2}) studied the stronger condition $\g_e = 
[\g_e,\g_e]$. In this paper we call elements $e$
satisfying this condition {\em strongly reachable}. She showed that
for $\g$ of classical type, $e$ is strongly reachable if and only if
the nilpotent orbit of $e$ is rigid. By an explicit example
this is shown to fail for $\g$ of exceptional type.
For the exceptional types we can show the following.

\begin{proposition}\label{prop:reachrig}
Let $\g$ be a simple Lie algebra of exceptional type. Let $e\in \g$ be
nilpotent. Then $e$ is strongly reachable if and only if $e$ is both
reachable and rigid.
\end{proposition}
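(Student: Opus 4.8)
The plan is to split the biconditional into the three implications it packages and to dispose of all of the nontrivial content, for each exceptional type $G_2,F_4,E_6,E_7,E_8$ in turn, by a single enumeration of the nilpotent orbits in {\sf GAP}. One inclusion needs no computation at all: if $e$ is strongly reachable, i.e.\ $\g_e=[\g_e,\g_e]$, then in particular $e\in\g_e=[\g_e,\g_e]$, so $e$ is reachable. Hence ``strongly reachable'' always implies ``reachable'', and it remains to compare strong reachability with the conjunction of reachability and rigidity. Writing $S$, $R$ and $\mathrm{Rig}$ for the sets of strongly reachable, reachable and rigid nilpotent orbits, the proposition asserts $S=R\cap\mathrm{Rig}$, and I would verify this set equality directly for each type.

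I would compute $S$ and $R$ inside one loop over the list returned by \verb1NilpotentOrbits1. For a representative $e$ obtained from \verb1SL2Triple1 I form $\g_e$ with \verb1LieCentralizer1 and $[\g_e,\g_e]$ with \verb1LieDerivedSubalgebra1; then the membership $e\in[\g_e,\g_e]$ (which tests $R$) and the equality $\g_e=[\g_e,\g_e]$, most cheaply checked by comparing dimensions since $[\g_e,\g_e]\subseteq\g_e$ always holds (which tests $S$), are both immediate. The one genuinely different ingredient is $\mathrm{Rig}$. For this I would call \verb1InducedNilpotentOrbits1 and exploit the sheet-diagram description recalled in Section~\ref{sec:gapnilo}: a sheet is of the form $(\g,\O)$ with $\O$ rigid exactly when its sheet diagram carries no label $2$, equivalently when the associated Levi subalgebra is all of $\g$ (so that $\dim\z(\myl)=0$ and the sheet has rank $0$), and such sheets are in bijection with the rigid orbits. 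Moreover for such a sheet the diagram coincides with the weighted Dynkin diagram of the orbit, so the rigid orbits are precisely those whose weighted Dynkin diagram occurs as a sheet diagram containing no $2$.

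Since the rigidity data come from a separate function call, the three computations must be matched up, which I would do through the weighted Dynkin diagram, as it determines the orbit uniquely. Having tabulated, for every orbit, the three booleans $e\in S$, $e\in R$, $e\in\mathrm{Rig}$, it suffices to confirm that the first holds precisely when the other two both hold. I expect the main obstacle to be purely computational rather than conceptual: for $E_8$ the Lie algebra is $248$-dimensional with some seventy nilpotent orbits, so forming and comparing all the centralizers and their derived subalgebras, and in particular running \verb1InducedNilpotentOrbits1, is the costly part, and getting the rigidity criterion right is the only delicate modelling step. The whole content beyond the trivial inclusion $S\subseteq R$ is thereby reduced to this finite verification. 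This mirrors the classical situation: there Yakimova's theorem $S=\mathrm{Rig}$ together with the trivial inclusion $S\subseteq R$ forces $\mathrm{Rig}\subseteq R$, so intersecting with $R$ changes nothing; in the exceptional types that last inclusion fails, i.e.\ some rigid orbits are not reachable, and this is exactly why the reachability condition must be imposed.
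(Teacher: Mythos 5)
Your plan is sound and is essentially the first of the two routes the paper itself indicates: the paper states that ``the converse can easily be shown by direct computation'' using the package's facility for finding rigid orbits, which is exactly your finite verification. There are two genuine differences. First, for the implication ``strongly reachable $\Rightarrow$ rigid'' the paper does not compute at all but cites Yakimova's Proposition~11, which holds for arbitrary simple $\g$; together with the trivial inclusion $S\subseteq R$ this reduces the whole statement to the single inclusion $R\cap\mathrm{Rig}\subseteq S$, whereas you verify the full set equality. Second, the paper also offers a conceptual proof of that remaining inclusion, deducing it from Proposition~\ref{prop:pan}: for rigid $e$ the reductive part $\g(0)_e$ is semisimple, hence equals its own derived algebra, one has $[\g(0)_e,\g(1)_e]=\g(1)_e$ by Yakimova's Lemma~8, and the Panyushev property then forces $[\g_e,\g_e]=\g_e$. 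This buys an explanation of \emph{why} the result holds, not merely a certificate that it does. The one point in your write-up I would flag is the rigidity test: your identification of rigid orbits with rank-zero sheets (sheet diagram without a label $2$, in which case the sheet diagram is the weighted Dynkin diagram of the orbit) is mathematically correct, but it is doubtful that \texttt{InducedNilpotentOrbits} returns those rank-zero sheets at all --- its name, and the fact that the paper's subsequent remark reaches for the separate function \texttt{RigidNilpotentOrbits}, suggest it lists only the sheets attached to proper Levi subalgebras, in which case your criterion would declare every orbit non-rigid. Using \texttt{RigidNilpotentOrbits} directly (or the published lists of rigid orbits) removes this fragility; with that repair your argument goes through.
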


\begin{proof}
If $e$ is strongly reachable then it is reachable, but also rigid by
\cite{yakimova2}, Proposition 11. As the {\sf SLA} package has a function for
determining the rigid nilpotent orbits, the converse can easily be shown
by direct computation.
But it also follows from Proposition \ref{prop:pan}. 
Indeed, if $e$ is rigid then $\g(0)_e$ is semisimple, so
$[\g(0)_e,\g(0)_e]=\g(0)_e$. Furthermore, $[\g(0)_e,\g(1)_e]= 
\g(1)_e$ by \cite[Lemma 8]{yakimova2} (where this is shown to hold for 
all nilpotent $e$).
By the Panyushev property this implies that $[\g_e,\g_e]= \g_e$.
\end{proof}

\begin{remark}
We can easily compute the rigid nilpotent orbits that are not strongly reachable.
Here is an example for the Lie algebra of type $E_8$.

\begin{verbatim}
gap> L:= SimpleLieAlgebra( "E", 8, Rationals );;
gap> rig:= RigidNilpotentOrbits( L );;
gap> exc:= [ ];;
gap> for o in rig do 
> e:= SL2Triple( o )[3]; ge:= LieCentralizer( L, Subalgebra(L,[e]) );
> if ge <> LieDerivedSubalgebra(ge) then Add( exc, o ); fi;
> od;
gap> Length( exc );
3
gap> WeightedDynkinDiagram( exc[1] );
[ 0, 0, 0, 0, 0, 1, 0, 1 ]
\end{verbatim}
We see that we have obtained three nilpotent orbits that are rigid but
not strongly reachable. Comparing the weighted Dynkin diagram of the first
of those orbits with the tables in \cite{colmcgov} we see that its Bala-Carter
label is $A_3+A_1$. Table \ref{tab:rignsr} contains the rigid but not strongly
reachable orbits in the Lie algebras of exceptional type;
it is used in the proof of \cite[Lemma 3.7]{preste}. For
an explanation of the notation used for the labels we refer to
\cite[\S 8.4]{colmcgov}.

\begin{table}[htb]
\begin{tabular}{|c|c|c|c|c|c|c|}
  \hline
  type & $E_7$ & $E_8$ & $E_8$ & $E_8$ & $F_4$ & $G_2$\\
  \hline
  label \phantom{{\huge B}} 
  & {\small $(A_3+A_1)'$} & {\small $A_3+A_1$} & {\small $D_5(a_1)+A_2$} &
  {\small $A_5+A_1$} & {\small $\widetilde{A}_2+A_1$} & {\small $A_1$} \\
\hline
{\small $(\dim \g_e, \dim [\g_e,\g_e])$} & (41,40) & (84,83) & (46,45) & (46,45) &
(16,15) & (6,5) \\
\hline
\end{tabular}
\caption{Rigid but not strongly reachable nilpotent orbits}\label{tab:rignsr}  
\end{table}
From the last line we see that in all cases $[\g_e,\g_e]$ is of codimension 1
in $\g_e$. Taking Proposition \ref{prop:reachrig} into account we see that
this implies that $\g_e = \langle e \rangle \oplus [\g_e,\g_e]$. In
\cite{preste} the $e$ with this property are called {\em almost reachable}.
\end{remark}  

\section{The quotients $\mf{c}_e$}\label{sec:Qe}

Let $\g$ be a simple Lie algebra, and $e$ a representative of a nilpotent orbit.
As before we denote its centralizer by $\g_e$. In this section we consider the
quotient $\mf{c}_e = \g_e/[\g_e,\g_e]$. These have been studied by Premet and
Topley \cite{pretop} in relation to finite $W$-algebras. In \cite{pretop}
it is shown that the statement of Proposition \ref{prop:sht} holds without 
exceptions for the classical Lie algebras. Proposition \ref{prop:sht}, as well as
the tables of \cite[Section 3]{slapaper}, are used in \cite{pretop} for showing
that for $\g$ of exceptional type, $U(\g,e)^{\mathrm{ab}}$ (the abelianization of
a finite $W$-algebra $U(\g,e)$) is isomorphic to a polynomial ring (with the
same six exceptions as Proposition \ref{prop:sht}).

\begin{proposition}\label{prop:sht}
Let $\g$ be a simple Lie algebra of exceptional type. Let $e\in \g$ be a
representative of an induced nilpotent orbit lying in a unique sheet. Then the
rank of that sheet is equal to $\dim \mf{c}_e$, except the
cases listed in Table \ref{tab:prop1}.
\end{proposition}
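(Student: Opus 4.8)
The plan is to prove this by direct computation in GAP, mirroring the approach used in the proofs of Propositions~\ref{prop:pan} and~\ref{prop:reachrig}. The statement asserts an equality of two numerical invariants attached to each induced nilpotent orbit that lies in a unique sheet: the rank of that sheet, and $\dim\cc_e$. Both quantities are, by the machinery already set up in the package {\sf SLA}, directly accessible, so the strategy is to enumerate all sheets, filter out those whose nilpotent orbit lies in more than one sheet, and then compare the two numbers for each surviving orbit.

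First I would, for each exceptional type $G_2, F_4, E_6, E_7, E_8$, call \verb1InducedNilpotentOrbits1 to obtain the list of records in bijection with the sheets. Recall from the preliminaries that each record carries a \verb1sheetdiag1 component, from which the rank of the sheet is read off as the number of labels equal to $2$ (these mark the simple roots not in $\Pi$, i.e.\ the dimension of the centre of $\myl$); and a \verb1norbit1 component giving the nilpotent orbit contained in the sheet. Next I would identify, among these records, those whose orbit lies in a \emph{unique} sheet. Since two sheets contain the same orbit precisely when their \verb1norbit1 entries coincide, this amounts to detecting which weighted Dynkin diagrams occur exactly once across the list; a single pass counting multiplicities of \verb1WeightedDynkinDiagram( rec.norbit )1 suffices to isolate the uniquely-sheeted orbits.

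For each such orbit I would then compute $\dim\cc_e = \dim\g_e - \dim[\g_e,\g_e]$ by forming the $\ssl_2$-triple representative $e$ via \verb1SL2Triple1, taking $\g_e$ as \verb1LieCentralizer( L, Subalgebra(L,[e]) )1, and its derived subalgebra via \verb1LieDerivedSubalgebra1, exactly as in the earlier sections. Comparing this dimension against the rank of the sheet yields the claimed equality or flags an exception. Collecting the discrepancies across all five types then produces Table~\ref{tab:prop1}, and verifying that the table is complete is simply the assertion that no uniquely-sheeted orbit outside it fails the equality.

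The main obstacle is not conceptual but computational: in type $E_8$ the Lie algebra has dimension $248$, there are many sheets, and for the larger orbits forming the centralizer and its derived subalgebra and computing their dimensions can be expensive in both time and memory. The correctness of the result therefore rests on the reliability of the underlying {\sf SLA} routines for \verb1InducedNilpotentOrbits1 and for centralizer computations, and on correctly matching orbits across sheets by their weighted Dynkin diagrams (which, as noted in the preliminaries, uniquely determine the orbit). Once the enumeration terminates, reading off the exceptions and confirming that every other uniquely-sheeted orbit satisfies $\mathrm{rank} = \dim\cc_e$ is routine.
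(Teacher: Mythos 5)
Your proposal matches the paper's proof essentially exactly: the paper also enumerates the sheets with \verb1InducedNilpotentOrbits1, counts for each sheet how many sheets share its \verb1norbit1, restricts to those where this count is $1$, reads the rank off the sheet diagram as the number of labels equal to $2$, and compares it with $\dim\mf{c}_e$ computed via \verb1LieCentralizer1 and \verb1LieDerivedSubalgebra1, collecting the discrepancies into Table~\ref{tab:prop1}. The only cosmetic difference is that you propose matching orbits across sheets by their weighted Dynkin diagrams while the paper compares the \verb1norbit1 records directly, which is equivalent.
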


\begin{table}[htb]
\begin{tabular}{|l|l|l|c|c|}
\hline

$\g$ & label & weighted Dynkin diagram & rank & $\dim \mf{c}_e$ \\
\hline

$E_6$ & $A_3+A_1$ & $0~~~~1~~~~\overset{\text{\normalsize 1}}{0}~~~~1~~~~0$ 
& 1 & 2 \\
$E_7$ & $D_6(a_2)$ & $0~~~~1~~~~\overset{\text{\normalsize 1}}{0}~~~~1~~~~0~~~~2$
& 2 & 3\\
$E_8$ & $D_6(a_2)$ & $0~~~~1~~~~\overset{\text{\normalsize 1}}{0}~~~~0~~~~0~~~~1~~~~0$
& 1 & 3\\
$E_8$ & $E_6(a_3)+A_1$ & $1~~~~0~~~~\overset{\text{\normalsize 0}}{0}~~~~1~~~~0~~~~1~~~~0$
& 1 & 3\\
$E_8$ & $E_7(a_2)$ & $0~~~~1~~~~\overset{\text{\normalsize 1}}{0}~~~~1~~~~0~~~~2~~~~2$
& 3 & 4\\
$F_4$ & $C_3(a_1)$ &  1~~~0~~~1~~~0 & 1 & 3\\
\hline
\end{tabular}
\caption{Table of exceptions to Proposition \ref{prop:sht}.}\label{tab:prop1}
\end{table}

\begin{proof}
The proof is obtained by explicit computations in \GAP\ with the {\sf SLA}
package loaded.  We show the computation for the Lie algebra of type $E_8$.
First we compute the list of sheets (as explained in Section
\ref{sec:gapnilo}). For each sheet we compute $\dim \mf{c}_e$, where
$e$ is a representative of the unique nilpotent orbit in the sheet. These
dimensions are stored in the list \verb1dims1. Secondly, for each sheet we
compute the number of sheets having the same nilpotent orbit as the given
sheet. This number is stored in the list \verb1nr1. 
\begin{verbatim}  
gap> L:= SimpleLieAlgebra( "E", 8, Rationals );;
gap> shts:= InducedNilpotentOrbits( L );;
gap> nr:= [ ];; dims:= [ ];;
gap> for s in shts do            
> e:= SL2Triple( s.norbit )[3]; 
> ge:= LieCentralizer( L, Subalgebra( L, [e] ) );
> Add( dims, Dimension(ge)-Dimension(LieDerivedSubalgebra(ge)) );
> Add( nr, Length( Filtered( shts, t -> t.norbit = s.norbit ) ) );      
> od;
\end{verbatim}

For each sheet whose nilpotent orbit lies in no other sheet (that is, the
corresponding element of \verb1nr1 is 1) we compute its rank, which is
equal to the number of 2's in its sheet diagram (see Section \ref{sec:gapnilo}).
If the rank is not equal to $\dim \mf{c}_e$ then we store the sheet
in the list \verb1exc1. At the end this list contains the elements of Table
\ref{tab:prop1}. 

\begin{verbatim}
gap> exc:= [ ];;       
gap> for i in [1..Length(shts)] do
> if nr[i]=1 then                               
> rk:= Length( Filtered( shts[i].sheetdiag, x -> x = 2 ) );
> if rk <> dims[i] then Add( exc, shts[i] ); fi;
> fi; od;
gap> WeightedDynkinDiagram( exc[1].norbit );
[ 0, 1, 1, 0, 1, 0, 2, 2 ]
gap> Length( Filtered( exc[1].sheetdiag, x -> x = 2 ) );
3
gap> Position( shts, exc[1] );
8
gap> dims[8];
4
\end{verbatim}
So we have obtained the data of the penultimate line of Table \ref{tab:prop1}.
\end{proof}

\begin{proposition}
Let $\g$ be a simple Lie algebra of exceptional type, and let $e\in \g$ be a
nilpotent orbit that lies in more than one sheet. Then the maximal rank of
such a sheet is strictly smaller than $\dim \mf{c}_e$.
\end{proposition}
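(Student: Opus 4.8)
The plan is to mirror the computational strategy already used in the proof of Proposition \ref{prop:sht}, exploiting the data that has in fact already been assembled in that proof. The key observation is that the final proposition is not an independent claim but a statement about those nilpotent orbits which the previous proof deliberately \emph{excluded} from consideration --- namely the orbits for which the corresponding entry of the list \verb1nr1 is strictly greater than $1$. First I would re-run the same setup: construct $\g$ of each exceptional type, compute \verb1shts := InducedNilpotentOrbits(L)1, and for each sheet record both $\dim \mf{c}_e = \dim \g_e - \dim [\g_e,\g_e]$ (stored in \verb1dims1) and the multiplicity \verb1nr[i]1 counting how many sheets share the orbit \verb1shts[i].norbit1. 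This is exactly the data produced in the proof of Proposition \ref{prop:sht}, so no new machinery is required.

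The core of the argument is then a filtering and comparison step. For each nilpotent orbit $\O$ that lies in more than one sheet, I would collect all sheets $s$ with \verb1s.norbit1 $= \O$, compute the rank of each such sheet as the number of $2$'s in its sheet diagram \verb1s.sheetdiag1 (as described in Section \ref{sec:gapnilo}), and take the maximum of these ranks. The proposition asserts that this maximal rank is strictly less than $\dim \mf{c}_e$. Since $\dim \mf{c}_e$ is constant across all sheets containing $\O$ (it depends only on the orbit, not on the sheet), the verification reduces to a single inequality \texttt{maxrank} $<$ \verb1dims[i]1 for each multiply-covered orbit. Carrying this out in \GAP\ amounts to a short loop over the distinct orbits appearing with multiplicity $\geq 2$, and the whole check can be performed type by type for $G_2, F_4, E_6, E_7, E_8$.

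The step I expect to be the genuine obstacle is not the logic but the \emph{completeness and bookkeeping} of the case analysis --- specifically, correctly grouping sheets by their common nilpotent orbit and ensuring that the comparison of \verb1.norbit1 objects identifies orbits reliably. In the example following Section \ref{sec:gapnilo} the equality test \verb1t.norbit = s.norbit1 was already used successfully to detect that sheets $19$ and $22$ of $E_7$ share an orbit, so this equality test is available; the care required is simply to make sure every distinct multiply-covered orbit is represented exactly once and that the maximal rank is computed over the full collection of its sheets. One subtlety worth flagging is that orbits lying in three or more sheets (such as the $E_7$ orbit with diagram $[0,0,0,2,0,0,2]$ noted earlier) must have their rank maximized over all of them, not just a representative pair.

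I would close the proof by remarking that in every case the computed strict inequality holds, and that this contrasts cleanly with Proposition \ref{prop:sht}: for orbits in a unique sheet the rank \emph{equals} $\dim \mf{c}_e$ up to the six exceptions of Table \ref{tab:prop1}, whereas for orbits in several sheets the rank always falls strictly short of $\dim \mf{c}_e$. Together the two propositions give a complete description of the relationship between sheet rank and $\dim \mf{c}_e$ for induced orbits in the exceptional types, which is precisely what is needed for the applications in \cite{pretop}. As in the previous proof, I would display the explicit \GAP\ session for one representative type (most naturally $E_8$, since it carries the richest collection of multiply-covered orbits) and assert that the analogous computations for the remaining exceptional types yield the same conclusion.
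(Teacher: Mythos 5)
Your proposal is correct and follows essentially the same route as the paper: reuse the data (\verb1shts1, \verb1dims1, \verb1nr1) from the proof of Proposition \ref{prop:sht}, group the sheets sharing a common nilpotent orbit via the \verb1t.norbit = s.norbit1 test, compute each sheet's rank as the number of $2$'s in its sheet diagram, and verify the strict inequality $\max(\mathrm{ranks}) < \dim\mf{c}_e$ by a direct \GAP\ loop for each exceptional type. The paper's proof does exactly this, displaying the $E_8$ session and printing a marker for each multiply-covered orbit to confirm the inequality.
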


\begin{proof}
Also this proposition is proved by direct computation. Again we show the
computation for the simple Lie algebra of type $E_8$. We assume that the first
part of the computation explained in the proof of the previous proposition
has been done. In this case, for each sheet such that the corresponding number
in \verb1nr1 is greater than one, we first determine all sheets that have the
same nilpotent orbit. This list is assigned to the identifier \verb1sh1.
Then we compute the rank of all sheets in \verb1sh1. If the maximum of 
those ranks is not strictly smaller than $\dim \mf{c}_e$ (where $e$ is a
representative of the nilpotent orbit in the considered sheet) then
we print a \verb1?1; otherwise we print a \verb1!1. Since we only obtain
\verb1!1, the proposition is proved in this case.
\begin{verbatim}
gap> for i in [1..Length(shts)] do
> if nr[i] > 1 then                                             
> sh:= Filtered( shts, t -> t.norbit = shts[i].norbit );
> rks:= List( sh, r -> Length( Filtered( r.sheetdiag, x -> x=2 ) ) );
> if Maximum( rks ) >= dims[i] then Print("?"); else Print("!"); fi;
> fi; od;
!!!!!!!!!!!!!!!!!!!!!!!!!!!!!!!!!!!!!!!
\end{verbatim}
\end{proof}

\begin{remark}
Let $e\in \g$ be nilpotent lying in the $\ssl_2$-triple $(f,h,e)$. The 
Jacobi identity implies that the adjoint map $\ad h : \g\to \g$ stabilizes
$\g_e$ and $[\g_e,\g_e]$. Hence it induces a map $\ad h : \mf{c}_e
\to \mf{c}_e$. The representation theory of $\ssl_2(\C)$ implies that
$\ad h$ acts with non-negative integral eigenvalues on $\mf{c}_e$.
The paper \cite{slapaper} contains tables listing those eigenvalues for
the nilpotent orbits of exceptional simple Lie algebras.
\end{remark}  

\section{Closures of nilpotent orbits of $\mathrm{Spin}_{13}$}\label{sec:clos0}

Let $G$ be a reductive complex algebraic group and let $V$ be a
finite-dimensional rational $G$-module. Then the invariant ring
$\C[V]^G$ is finitely generated by homogeneous elements. The null cone
$N_G(V)$ is defined to be the zero locus of the homogeneous invariants
of positive degree. The null cone is stable under the action of $G$ but in
general consists of an infinite number of orbits. Hesselink \cite{hesselink2}
constructed a stratification of the null cone, by which it is possible to
study its geometric properties. In \cite[\S 5.5, 5.6]{povin} Popov and
Vinberg gave a version
of this theory in characteristic 0 that works with certain elements,
called characteristics, in the Lie algebra of $G$. Popov \cite{popov}
developed an algorithm to compute these characteristics. 

The $G$-module $V$ is
said to be {\em visible} (or {\em observable}) if the null cone has a finite
number of orbits. Kac \cite{kacnilp} classified the visible representations of
reductive algebraic groups. It turns out that irreducible visible
representations of connected simple groups either
arise as so-called $\theta$-groups or as the spinor modules of
$\mathrm{Spin}_{11}(\C)$ and $\mathrm{Spin}_{13}(\C)$. For an algorithm for
determining the closures of the nilpotent orbits of a $\theta$-group we
refer to \cite{gravinya}. The orbits of the spinor module
$\mathrm{Spin}_{11}(\C)$ have been determined by Igusa \cite{igusa}.
It is likely that the closures of the nilpotent orbits can be determined
in the same way as is done below. 

Kac and Vinberg \cite{gavin} classified the orbits of the group
$\mathrm{Spin}_{13}(\C)$ on its 64-dimensional spinor module. It turns out
that the null cone has 13 orbits (excluding 0). A' Campo and Popov
\cite[Example (f), p. 348]{dekem3}, using their implementation of Popov's
algorithm \cite[Appendix C]{dekem3} for computing the characteristics
of the strata, observed that there are also 13 strata in the null cone. This
implies that the strata are orbits. Moreover, their computations gave the
dimensions of the orbits in the null cone, which were not all 
correctly given in \cite{gavin}.

The package {\sf SLA} also has an implementation of Popov's algorithm.
So we can recover these observations by a computation using that
package. In this section we give an algorithm, which is similar to an
algorithm given in \cite{gravinya}, to determine when the (Zariski-) closure
of a stratum contains a given other stratum. This algorithm works under
some hypotheses that are shown to be satisfied by the spinor module of
$\mathrm{Spin}_{13}(\C)$. We discuss a simple implementation of this
algorithm in {\sf GAP} and we obtain the Hasse diagram of the closures of
the orbits in the null cone of the spinor module of $\mathrm{Spin}_{13}(\C)$.

\subsection{Preliminaries on the strata of the nullcone}

Everything we will say here works for reductive groups, but for simplicity we
consider a simple algebraic group $G$ over $\C$. We let $\g$ be its Lie
algebra and $(~,~) : \g\times \g \to \C$ the Killing form (so
$(x,y) = \Tr((\ad x)(\ad y))$). We say that a semisimple element $h\in \g$
is {\em rational} if the eigenvalues of $\ad h$ lie in $\Q$. This is equivalent
to saying that the eigenvalues of $h$ on any $\g$-module are rational.

Let $\h\subset \g$ be a Cartan subalgebra. Then by $\h_\Q$ we denote the
set of its rational elements, which is a vector space over $\Q$ of dimension
$\dim_\C \h$. We define the norm of $h\in \h_\Q$ by $||h|| = \sqrt{(h,h)}$.

Now we let $V$ be a rational $G$-module and consider the null cone $N_G(V)$.
By the Hilbert-Mumford criterion a $v\in V$ lies in $N_G(V)$ if and
only if there is a cocharacter $\chi : \C^*\to G$ such that
$\lim_{t\to 0} \chi(t)\cdot v = 0$ (see \cite[Section III.2]{kraft}).
Setting $h = d \chi(1)$ we have that $h$ is
a rational semisimple element and writing $v$ as a sum of $h$-eigenvectors
we get that the corresponding eigenvalues are all positive.

For a rational semisimple $h\in \g$ and $\tau\in \Q$ we let $V_\tau$ be
the $\tau$-eigenspace of $h$. Furthermore, we set
$$V_{\geq 2} (h) = \bigoplus_{\tau\geq 2} V_\tau(h).$$
Let $v\in V$. Then a {\em characteristic} of $v$ is a shortest rational
semisimple element $h\in \g$ such that $v\in V_{\geq 2}(h)$.
We have the following facts concerning characteristics
(see \cite[\S 5.5, 5.6]{povin}, \cite[\S 7.4.1, 7.4.2]{gra16}):
\begin{enumerate}
\item $v$ has a characteristic if and only if $v\in N_G(V)$.
\item If $h\in \g$ is a characteristic of $v\in V$ and $g\in G$ then
  $\Ad(g)(h)$ is a characteristic of $gv$.
\item Let $\h$ be a fixed Cartan subalgebra of $\g$. Then there are a finite
  number of characteristics $h_1,\ldots,h_s$ in $\h$, up to the action of $G$.
\item For $1\leq i\leq s$ let $S(h_i)$ be the set of all $v\in N_G(V)$
  such that $v$ has a characteristic that is $G$-conjugate to $h_i$.
  The set $S(h_i)$ is called the stratum corresponding to $h_i$.
\item The stratification of $N_G(V)$ is $N_G(V) = S(h_1)\cup
  \cdots \cup S(h_s)$ (disjoint union).  
\end{enumerate}  

Popov \cite{popov} (see also \cite[\S 7.4.3]{gra16})
devised an algorithm to compute the characteristics 
$h_1,\ldots,h_s$ in $\h$. The algorithm also computes the dimensions of
the corresponding strata.

\subsection{Closures of the strata}\label{sec:clos}

The topological notions (closed sets, open sets, closure,...) that we use here
are relative to the Zariski topology.

Let $\h$ be a fixed Cartan subalgebra of $\g$. For a rational $h\in \h$
we let $Z(h) = \{ g\in G \mid \Ad(g)(h) = h\}$; then
$\z(h) = \{ x\in \g \mid [x,h]=0\}$ is the Lie algebra of $Z(h)$.
Both $Z(h)$ and $\z(h)$ stabilize the spaces $V_\tau(h)$ for $\tau\in \Q$. 

Let $h_1,\ldots,h_s\in\h$ be the characteristics of the strata of the 
nullcone of $V$.

Here we assume two things:
\begin{enumerate}
\item Each $V_2(h_i)$ has an open $Z(h_i)$-orbit.
\item The strata coincide with the $G$-orbits in the nullcone.
\end{enumerate}

\begin{remark}\label{rem:vinopen}
Let $h$ be one of the characteristics. A $v\in V_2(h)$ lies in the
open $Z(h)$-orbit if and only if $\z(h)\cdot v = V_2(h)$.  
\end{remark}  

Under these hypotheses we can generalize a few results from \cite{gravinya}.

\begin{lemma}\label{lem:char1}
Let $h$ be one of the characteristics. Then the open
$Z(h)$-orbit in $V_2(h)$ is equal to $V_2(h)\cap S(h)$. Moreover, $h$
is a characteristic of every element in $V_2(h)\cap S(h)$.
\end{lemma}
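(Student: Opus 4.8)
The plan is to reduce both assertions to a single optimality statement about the characteristic $h$ and then to invoke the general theory of shortest characteristics. The first and cleanest step is to record the identity
\[
V_2(h)\cap S(h)=\{\, v\in V_2(h)\mid h\text{ is a characteristic of }v\,\}=:C .
\]
The inclusion ``$\supseteq$'' is immediate from the definition of $S(h)$. For ``$\subseteq$'', take $v\in V_2(h)\cap S(h)$; by definition $v$ has a characteristic $h'=\Ad(g)(h)$ for some $g\in G$, and since the Killing form is $G$-invariant we get $\|h'\|=\|h\|$. As $v\in V_2(h)\subseteq V_{\geq 2}(h)$, the element $h$ is a rational semisimple element realizing the minimal norm $\|h'\|$ among all $k$ with $v\in V_{\geq 2}(k)$; hence $h$ itself is a characteristic of $v$. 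This observation already establishes the ``Moreover'' part of the statement, and it shows the lemma is equivalent to the single assertion $C=\O$, where $\O$ is the open $Z(h)$-orbit guaranteed by hypothesis (1).

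Next I would set up the two sides of $C=\O$ geometrically. Both $C$ and $\O$ are $Z(h)$-stable subsets of $V_2(h)$: the former because $S(h)$ is $G$-stable (indeed a single $G$-orbit by hypothesis (2)) and $Z(h)$ preserves $V_2(h)$, the latter by definition. By Remark \ref{rem:vinopen}, a vector $v\in V_2(h)$ lies in $\O$ exactly when $\z(h)\cdot v=V_2(h)$, so the target equality reads: \emph{for $v\in V_2(h)$, the element $h$ is a characteristic of $v$ if and only if $v$ is a generic point of $V_2(h)$ for the $Z(h)$-action.} I would prove the two implications separately.

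The substantive point, and the one I expect to require real work, is this equivalence. I would derive it from the theory of shortest characteristics (\cite[\S5.5, 5.6]{povin}, \cite[\S7.4]{gra16}), of which the present lemma is the specialization to $V_2(h)$ of the results of \cite{gravinya}. The mechanism is a shortening argument: if $v\in V_2(h)$ is not generic, one exploits its extra degeneracy to exhibit a rational semisimple $k\in\g$ with $v\in V_{\geq 2}(k)$ and $\|k\|<\|h\|$, contradicting that $h$ is a characteristic; conversely, for generic $v$ no such shortening exists, so $h$ is shortest and $v\in S(h)$. Hypothesis (1) is exactly what guarantees that this dichotomy is controlled by membership in the \emph{single} open orbit $\O$, rather than by a finer stratification of $V_2(h)$: since the finitely many strata are $Z(h)$-stable and partition $N_G(V)$, the one $Z(h)$-orbit $\O$ must lie in a single stratum, which the shortening argument then identifies as $S(h)$. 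Producing the shorter element $k$ explicitly and checking simultaneously that the norm strictly drops and that $v$ remains in $V_{\geq 2}(k)$ is the genuine obstacle; once it is in hand, everything else is bookkeeping with the facts (1)--(5) on characteristics and Remark \ref{rem:vinopen}.
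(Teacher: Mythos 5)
Your opening reduction is correct, and for the ``moreover'' clause it is arguably cleaner than what the paper does: since every characteristic of a fixed $v$ has the same norm (by $G$-invariance of the Killing form) and a characteristic is by definition a norm-minimizer among rational semisimple $k$ with $v\in V_{\geq 2}(k)$, any $v\in V_2(h)\cap S(h)$ must admit $h$ itself as a characteristic. So you have correctly reduced the lemma to the single equality $C=\O$, where $C=V_2(h)\cap S(h)$ and $\O$ is the open $Z(h)$-orbit.

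That equality, however, is the entire content of the lemma, and you do not prove it: you name the ``shortening argument'' as the genuine obstacle and stop there. This is a real gap, and the strategy you sketch is not the one that succeeds here. The paper never produces a shorter $k$ for a non-generic $v$; non-genericity of $v$ in $V_2(h)$ does not constrain which weights occur in $v$, so the convex-hull/closest-point criterion does not hand you a rational semisimple $k$ with $v\in V_{\geq 2}(k)$ and $||k||<||h||$. Instead the paper argues both inclusions by softer means. For $\O\subseteq C$ it invokes \cite[Theorem 5.4]{povin}: the set of $v\in V_2(h)$ having $h$ as a characteristic is open and nonempty in $V_2(h)$, so it meets the open orbit; thus some $gu$ with $g\in Z(h)$, $u\in\O$ has characteristic $h$, whence $u=g^{-1}(gu)$ has characteristic $\Ad(g^{-1})(h)=h$. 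For $C\subseteq\O$ it takes $v\in C$ and $u\in\O$, notes $v\in\overline{Z(h)u}$ so that $\dim\g_{\tau,v}\geq\dim\g_{\tau,u}$ for every $\ad h$-eigenvalue $\tau$ on the stabilizers (where $\g_{\tau,w}=\{x\in\g_w\mid [h,x]=\tau x\}$), then uses hypothesis (2) to place $u$ and $v$ in one $G$-orbit, forcing $\dim\g_v=\dim\g_u$ and hence equality for each $\tau$; the case $\tau=0$ gives $\dim\z(h)v=\dim\z(h)u=\dim V_2(h)$, so $Z(h)v$ is open by Remark \ref{rem:vinopen}. Unless you either carry out your shortening construction in full or replace it by arguments of this kind, the proof is incomplete.
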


\begin{proof}
Let $u$ be an element of the open $Z(h)$-orbit in $V_2(h)$.
From  Theorem 5.4 in \cite{povin} it follows that the set of elements of
$V_2(h)$ with characteristic $h$ is open and nonempty. As nonempty
open sets intersect, there is a $g\in Z(h)$ such that
$g\cdot u$ has characteristic $h$. But then the characteristic of
$u=g^{-1}\cdot (gu)$ is $\Ad(g^{-1})(h)=h$. It follows that $h$ is a
characteristic of $u$, and in particular that $u \in S(h)$.

For $\tau\in \Q$ and $w\in V_2(h)$ set
$${\g}_w = \{ x\in \g \mid x\cdot w = 0\}, \text{ and } {\g}_{\tau,w} =
\{ x\in {\g}_{w} \mid [h,x]=\tau x\}.$$
Let $v\in V_2(h)\cap S(h)$. 
Since $v$ lies in the closure of $Z(h)u$, we have that $\dim \g_{\tau,v}\geq
\dim \g_{\tau, u}$, for all $\tau\in \Q$. Because $u,v\in S(h)$ and our
assumption that the strata are $G$-orbits, $v$ and $u$ lie in the same
$G$-orbit. Hence $\dim \g_{v} = \dim \g_{u}$. But $\g_v$ is the 
direct sum of the various $\g_{\tau,v}$, and similarly for $\g_{u}$. It 
follows that $\dim \g_{\tau,v} = \dim \g_{\tau, u}$ for all $\tau$. But
$${\g}_{0,v} = \{ x\in \z(h) \mid x\cdot v = 0\},$$
and similarly for $\g_{0,u}$. This implies that $\dim \z(h)v = \dim \z(h)u$.
So also the orbit $Z(h)v$ is open in $V_2(h)$ by Remark \ref{rem:vinopen}.
In particular, $v$ lies in the open $Z(h)$-orbit in $V_2(h)$.
\end{proof}

\begin{lemma}
Let $W$ denote the Weyl group of the root system of $\g$.  
Let $h,h'$ be two of the characteristics. Then $S(h')$ is 
contained in the closure of $S(h)$ if and only if there is a 
$w\in W$ such that $U=V_2(h')\cap V_{\geq 2}(wh)$ contains a point of 
$S(h')$. Furthermore, the intersection of $U$ and $S(h')$
is open in $U$.
\end{lemma}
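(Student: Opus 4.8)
The plan is to handle the openness assertion first, then prove the two implications separately, the backward one being short and the forward one carrying the real content. Throughout I will use the collapsing identity $\overline{S(h)}=G\cdot V_{\geq 2}(h)$, which I record now: the linear space $V_{\geq 2}(h)$ is stable under the parabolic subgroup $P(h)$ attached to $h$ (since $\g_k(h)\cdot V_\tau(h)\subseteq V_{\tau+k}(h)$ and $k,\tau-2\geq 0$), so the image $G\cdot V_{\geq 2}(h)$ of the proper map from $G\times_{P(h)}V_{\geq 2}(h)$ is closed; as $S(h)\subseteq G\cdot V_{\geq 2}(h)$ this gives $\overline{S(h)}\subseteq G\cdot V_{\geq 2}(h)$. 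By Theorem 5.4 in \cite{povin} the generic element of $V_{\geq 2}(h)$ has characteristic $h$, so $S(h)\cap V_{\geq 2}(h)$ is dense in the irreducible set $V_{\geq 2}(h)$, whence $V_{\geq 2}(h)\subseteq\overline{S(h)}$ and equality holds.

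For the \emph{Furthermore} and the set-up: by Lemma \ref{lem:char1}, $V_2(h')\cap S(h')$ is exactly the open $Z(h')$-orbit $O'$, which is open in $V_2(h')$. Since $wh$ is $G$-conjugate to $h$, the set $V_{\geq 2}(wh)$ is a linear subspace, so $U=V_2(h')\cap V_{\geq 2}(wh)$ is a linear subspace of $V_2(h')$, and $U\cap S(h')=U\cap O'$ is the intersection of $U$ with a subset that is open in $V_2(h')$; hence it is open in $U$. This settles the last sentence of the lemma.

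For the implication $(\Leftarrow)$, let $p\in U\cap S(h')$. Writing $wh=\Ad(n)h$ with $n\in N_G(\h)$ representing $w$, we get $V_{\geq 2}(wh)=n\cdot V_{\geq 2}(h)\subseteq n\cdot\overline{S(h)}=\overline{S(h)}$, using that $S(h)$ (and hence its closure) is $G$-stable. Thus $p\in\overline{S(h)}$, and by hypothesis (2) that strata are orbits, $S(h')=G\cdot p\subseteq\overline{S(h)}$. For the converse $(\Rightarrow)$, assume $S(h')\subseteq\overline{S(h)}=G\cdot V_{\geq 2}(h)$ and pick $u'\in O'$; its characteristic is $h'$. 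There is a $g\in G$ with $p:=g u'\in V_{\geq 2}(h)$, and by $G$-equivariance of characteristics (fact (2) of Section~\ref{sec:clos}) the characteristic of $p$ is $c:=\Ad(g)h'$, while $p\in S(h')\cap V_2(c)\cap V_{\geq 2}(h)$. The decisive step is to replace $h$ by a conjugate commuting with $c$: I claim there is $g_1\in\Stab_G(p)$ with $[\Ad(g_1)h,c]=0$. Granting this, $c$ and $h_2:=\Ad(g_1)h$ lie in a common Cartan $\h_0$, and $p=g_1 p\in V_{\geq 2}(h_2)\cap V_2(c)\cap S(h')$. Choosing $g_2$ with $\Ad(g_2)\h_0=\h$ and $\Ad(g_2)c=h'$ (map $\h_0$ to $\h$, then correct by an element of $N_G(\h)$, since a $G$-class meets $\h$ in one $W$-orbit), the element $\Ad(g_2)h_2\in\h$ is $G$-conjugate to $h$, hence equals $wh$ for some $w\in W$; then $g_2 p\in V_2(h')\cap V_{\geq 2}(wh)\cap S(h')$ is the required point.

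The main obstacle is exactly the commuting claim of the previous paragraph: that the (generally suboptimal) destabilizing semisimple element $h$ with $p\in V_{\geq 2}(h)$ can be moved by $\Stab_G(p)$ so as to commute with the characteristic $c$ of $p$, which is the optimal destabilizing direction. This is the module analogue of the corresponding step in \cite{gravinya} and should be extracted from Kempf's optimality theory for unstable vectors — the stabilizer of $p$ lies in the optimal parabolic $P(c)$, and the various destabilizers of $p$ are controlled by that parabolic (see also \cite{kraft}). By contrast, the remaining bookkeeping — conjugating a commuting pair of rational semisimple elements into the fixed Cartan $\h$ and recognizing the image of $h$ as a Weyl translate $wh$ — I expect to be routine. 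So the work concentrates entirely on establishing this commuting statement under hypotheses (1) and (2).
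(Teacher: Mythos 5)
Your treatment of the easy implication and of the final openness statement is correct (for the latter you argue via Lemma \ref{lem:char1} that $U\cap S(h')=U\cap O'$ with $O'$ open in $V_2(h')$, which is a clean alternative to the paper's citation of Theorem 5.4 of \cite{povin}). But the forward implication --- the only substantive part of the lemma --- is not proved. Everything is made to rest on the claim that, given $p\in V_{\geq 2}(h)\cap V_2(c)$ with $c$ a characteristic of $p$, some $\Stab_G(p)$-conjugate of $h$ commutes with $c$; equivalently (replacing $c$ by $\Ad(g_1^{-1})c$), that $\z(h)$ contains a characteristic of $p$ whenever $p\in V_{\geq 2}(h)$. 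You explicitly leave this unproved, and the appeal to Kempf's optimality theory does not supply it: that theory controls the \emph{optimal} destabilizers of $p$ (they form a single orbit under the unipotent radical of the optimal parabolic $P(c)$, and $\Stab_G(p)\subseteq P(c)$), but says nothing about conjugating an \emph{arbitrary} destabilizer $h$ into $\z(c)$. The paper itself warns that this kind of statement is delicate: at the end of Section \ref{sec:clos} it is noted that a Cartan subalgebra containing a destabilizer of $u$ need not contain a characteristic of $u$. So the crux of the lemma is exactly the step you have deferred, and it is not a routine extraction from the literature.

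The paper avoids this issue entirely by a different mechanism: it does not try to move the destabilizer of a fixed point, but changes the point within its orbit. Starting from $\overline{S(h)}=GV_{\geq 2}(h)$ and the Bruhat decomposition $G=\bigcup_{w\in W}P(h')wP(h)$, one gets $\overline{S(h)}=\bigcup_w P(h')w(V_{\geq 2}(h))$ since $P(h)$ stabilizes $V_{\geq 2}(h)$. Writing $v'=pw\cdot v$ with $v'\in V_2(h')\cap S(h')$ and factoring $p^{-1}=zn$ through the Levi decomposition $P(h')=Z(h')\ltimes N$, one finds $p^{-1}v'=zv'+zv''$ with $zv'\in V_2(h')$, $zv''\in V_{>2}(h')$, and $p^{-1}v'\in V_{\geq 2}(h')\cap V_{\geq 2}(wh)$; since $h'$ and $wh$ lie in the \emph{same} Cartan and hence commute, this intersection is $h'$-graded, so its $V_2(h')$-component $zv'$ lies in $U$, and $zv'\in S(h')$ because $z\in Z(h')$. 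The two destabilizers are forced into a common Cartan by Bruhat, at the cost of replacing $v'$ by $zv'$ --- which is harmless because $S(h')$ is a $G$-orbit. To repair your argument you would either have to prove your commuting claim (which may well fail, or at least requires a genuine argument under hypotheses (1) and (2)), or switch to this Bruhat-decomposition route.
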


\begin{proof}
Here we use the fact that $\overline{S(h)} = GV_{\geq 2}(h)$
(\cite{povin}, Theorem 5.6). This immediately implies the ``only if'' part.

Let $P(h)$ denote the parabolic subgroup with Lie algebra $\oplus_{\tau \geq 0}
\g_{\tau}(h)$. Using the Bruhat decomposition we then have
$$\overline{S(h)} = \bigcup_{w\in W} P(h') wP(h) (V_{\geq 2}(h))
= \bigcup_{w\in W} P(h')w(V_{\geq 2}(h)).$$
Suppose that $S(h') \subset \overline{S(h)}$. Let $v'\in
V_2(h')\cap S(h')$.
Then it follows that
there are $p\in P(h')$, $w\in W$, $v\in V_{\geq 2}(h)$ with $v'=pw\cdot v$,
or $p^{-1}\cdot v'= w\cdot v$. 

We have that $P(h') = Z(h)\ltimes N$, where $N$ is the unipotent subgroup of
$G$ with Lie algebra $\oplus_{\tau >0} \g_\tau(h)$. So $p^{-1} = zn$ with
$z\in Z(h)$, $n\in N$. As $v'\in V_2(h')$, we see that $nv' = v'+v''$ with
$v''\in V_{>2}(h')$. So $p^{-1}\cdot v' = zv'+zv''$ with $zv'\in V_2(h')$, 
$zv''\in V_{>2}(h')$. In particular, $p^{-1}\cdot v' \in V_{\geq 2}(h')$.
But $w\cdot v \in V_{\geq 2}(wh)$. So   $p^{-1}\cdot v' \in V_{\geq 2}(h')\cap
V_{\geq 2}(wh)$. Denote the latter space by $\widetilde{U}$.

Since $h'$ and $wh$ commute, $\widetilde{U}$ is stable under $h'$. So 
$\widetilde{U}$ is the direct sum of $h'$-eigenspaces. Hence $zv'\in 
\widetilde{U}$. So, in fact, $zv' \in U$, and obviously, $zv'\in S(h')$.

The last statement follows from \cite{povin}, Theorem 5.4.
\end{proof}

These lemmas underpin a direct method for checking whether $S(h') \subset
\overline{S(h)}$: 
\begin{enumerate}
\item For all $w\in W$ compute the space $U_w = V_2(h')\cap V_{\geq 2}(wh)$.
\item Take a random point $u\in U_w$. If $\dim \z(h')\cdot u = \dim
V_2(h')$, then conclude that $S(h')\subset \overline{S(h)}$.
\end{enumerate}

If in Step 2, the equality does not hold, then it is very likely that $U_w$
contains no point of $S(h')$. However, we still need to prove it. One method
for that is described in \cite[Section 5]{gravinya}, based on 
computing the generic rank of a matrix with polynomial entries. It also works
here. However, a different approach is also possible: Compute the weights 
$\mu_1,\ldots,\mu_r\in \h^*$
of the weight spaces whose sum is $U_w$. By using the form $(~,~)$ 
we obtain an isomorphism $\nu : \h\to \h^*$ by $\nu(x)(y) = (x,y)$.
We consider the Euclidean space $\h_\R = \R\otimes \h_\Q$ with inner product
$(~,~)$. Let $C$ be the convex hull in $\h_\R$ of the points
$\hat h_i=\nu^{-1}(\mu_i)$. Note that all $\hat h_i$ lie in the affine space
$H_2$ consisting of all $x\in \h_\R$ with $(h',x)=2$. So also $C\subset H_2$.
Let $\tau\in \Q$ be
such that $(h',\tau h')=2$, then also $\tau h'\in H_2$. Now if $\tau h'$
does not lie in $C$ then $U_w$ has no point of $S(h')$.
This follows from the following fact: let $u\in U_w$, and 
let $C'$ be the convex hull of $\nu^{-1}(\mu)$, where $\mu$ ranges over
the weights involved in an expression of $u$ as sum of weight vectors, and let
$\tilde h$ be the point on $C'$ closest to 0, and let $\hat h$ be such that
$(\tilde h,\hat h)=2$, then $\hat h$ is a characteristic of $u$, or 
$\h$ does not contain a characteristic of $u$ (see \cite[Section 5.5]{povin}
or \cite[Lemma 7.4.16]{gra16}).

\subsection{Implementation for $\mathrm{Spin}_{13}$}

The Lie algebra of $G=\mathrm{Spin}_{13}(\C)$ is the simple Lie algebra of
type $B_6$. We can construct this Lie algebra in {\sf GAP}. The nodes
of the Dynkin
diagram of the root system of this Lie algebra are numbered in the
usual way (see, for example,  \cite[\S 11.4]{hum}). Denoting the corresponding
fundamental weights by $\lambda_1,\ldots,\lambda_6$ we have that the
highest weight of the spinor module is $\lambda_6$. The {\sf SLA} package
contains the function \verb1CharacteristicsOfStrata1 which implements
Popov's algorithm. On input a semisimple Lie algebra and a dominant weight
it returns a list of two lists: the first is the list of characteristics,
the second is the list of dimensions of the corresponding strata.
In the next example we compute the characteristics of the strata of the spinor
module of $G$ (which takes about 87 seconds). 
With \verb1SortParallel1 we sort the list of dimensions, and
apply the same permutation to the list of characteristics.
We display the list of dimensions and the first characteristic,
which is an element of $\g$. Comparing with \cite[Example (f), p. 348]{dekem3}
we see that we get the same dimensions as A'Campo and Popov.

\begin{verbatim}
gap> L:= SimpleLieAlgebra("B",6,Rationals);;
gap> st:= CharacteristicsOfStrata( L, [0,0,0,0,0,1] );; time;
86818
gap> chars:= st[1];; dims:= st[2];;
gap> SortParallel( dims, chars );
gap> dims;
[ 22, 32, 35, 42, 43, 43, 46, 50, 50, 53, 56, 58, 62 ]
gap> chars[1];
(2/3)*v.73+(4/3)*v.74+(2)*v.75+(8/3)*v.76+(10/3)*v.77+(2)*v.78
\end{verbatim}

Above we already argued that the strata are $G$-orbits. In order to be
able to apply the algorithm of the previous section we need to show that
for each characteristic $h$ the space $V_2(h)$ has an open $Z(h)$-orbit.
For this we first construct the spinor module $V$ (this is done with the
{\sf GAP} function {\tt HighestWeightModule}). If {\tt x}, {\tt v} are
elements of the Lie algebra {\tt L} and the module {\tt V} respectively, then
\verb1x^v1 is the result of acting with {\tt x} on \verb1v1. Since
the basis elements of the module that is output by \verb1HighestWeightModule1
are weight vectors relative to the Cartan subalgebra of \verb1L1 that contains
the characteristics, the following function can be used to find a basis of
$V_2(h)$:

\begin{verbatim}
V2:= function( V, h )
  return Filtered( Basis(V), v -> h^v = 2*v );
end;
\end{verbatim}

Let $h$ be a characteristic, say the fifth one. We show that $V_2(h)$ has an
open $Z(h)$-orbit:

\begin{verbatim}
gap> V:= HighestWeightModule( L, [0,0,0,0,0,1] );;
gap> h:= chars[5];;
gap> v2:= V2( V, h );;
gap> v:= Sum( v2, x -> Random([-100..100])*x );;
gap> zh:= LieCentralizer( L, Subalgebra( L, [h] ) );;
gap> zhv:= Subspace( V, List( Basis(zh), x -> x^v ) );;
gap> Dimension( zhv ) = Length(v2);
true
\end{verbatim}

Here we take a random point $v$ of $V_2(h)$. We let \verb1zh1, \verb1zhv1
be the centralizer $\z(h)$ and the space $\z(h)\cdot v$ respectively.
The last line shows that $\dim \z(h)\cdot v=
\dim V_2(h)$. This implies that the orbit of $v$ is open in $V_2(h)$
(Remark \ref{rem:vinopen}). We have executed this procedure for all
characteristics, and hence both hypotheses of the previous section are
satisfied. 

Now in order to execute the procedure of the previous section we need
functions for computing $V_{\geq 2}(h)$ and $wh$ for $w$ in the Weyl group
$W$. The function for the former is straightforward:

\begin{verbatim}
Vgeq2:= function( V, h )

        local m,i;

        m:= MatrixOfAction( Basis(V), h );
        i:= Filtered( [1..Length(m)], i -> m[i][i] >= 2 );
        return Basis( V ){i};
end;
\end{verbatim}

That is, we take the matrix of \verb1h1 (which is diagonal) and return the
list of basis vectors that correspond to an eigenvalue which is at least 2.

In order to compute $wh$ we consider a Chevalley basis of $L$,
\cite[Theorem 25.2]{hum}. Such a basis consists of elements $x_\alpha$ for
$\alpha$ in the root system, and $h_1,\ldots,h_\ell$ that lie in the
Cartan subalgebra. We refer to the cited theorem for the multiplication
table with respect to this basis. For a root $\alpha$ we set $h_\alpha =
[x_\alpha,x_{-\alpha}]$. Then we have $wh_\alpha = h_{w\alpha}$. Furthermore, if
$\alpha_1,\ldots,\alpha_\ell$ are the simple roots then $h_{\alpha_i} = h_i$.

A simple Lie algebra in {\sf GAP}, constructed with the function
\verb+SimpleLieAlgebra+, has a stored Chevalley basis. This is
a list consisting of three lists. In the first list we have the $x_\alpha$ for
$\alpha$ a positive root. In the second list we have the $x_\alpha$ for
$\alpha$ a negative root. The third list has the elements $h_1,\ldots,h_\ell$.
The ordering that is used on the positive roots is height compatible
(cf. \cite[\S 10.1]{hum}). This means that the $x_{\alpha_i}$ for
$1\leq i\leq \ell$ come first. Denote the positive roots, as ordered by 
{\sf GAP}, by $\alpha_1,\ldots,\alpha_n$. For $n+1\leq i\leq 2n$ set $\alpha_i =
-\alpha_{i-n}$. The {\sf SLA} package has a function,
\verb1WeylGroupAsPermGroup1, that gives the Weyl group as a permutation
group on $1,\ldots ,2n$. If $w$ is an element of this group then the
corresponding element of the Weyl group acts as $\alpha_i\mapsto \alpha_{i^w}$.
These considerations yield the following function for computing $wh$,
where $w$ is given as a permutation and $h$ lies in the given Cartan
subalgebra. Here the first two input parameters are the following: \verb1BH1 is
the basis of the Cartan subalgebra with basis vectors $h_1,\ldots,h_l$;
\verb1hs1 is the list $h_{\alpha_i}$ for $1\leq i\leq 2n$. 

\begin{verbatim}
wh:= function( BH, hs, w, h )

        local cf, i;

        cf:= Coefficients( BH, h );
        i:= List( [1..Length(cf)], j -> j^w );
        return cf*hs{i};
end;
\end{verbatim}

With this preparation we can give the implementation of the algorithm
described in Section \ref{sec:clos}. Here we give the simplified probabilistic
version, where we do not prove the non-inclusions. (The complete version
is longer as it includes an implemtation of a function to check membership
of a convex hull. It has been used to prove the correctness of the diagram
in Figure \ref{fig:spin13}, and is available from the author upon request.)
We start by defining a number of global variables
that will be accessed by the function. Most of these have been explained above.
The list \verb1eW1 contains all elements of the Weyl group. The function
\verb1inc1 is a straightforward implementation of the algorithm
given in Section \ref{sec:clos}.

\begin{verbatim}
L:= SimpleLieAlgebra("B",6,Rationals);
st:= CharacteristicsOfStrata( L, [0,0,0,0,0,1] );
chars:= st[1];; dims:= st[2];;
SortParallel( dims, chars );

V:= HighestWeightModule(L,[0,0,0,0,0,1]);
R:= RootSystem(L);
ch:= ChevalleyBasis(L);
hs:= List( [1..36], i -> ch[1][i]*ch[2][i] );
hs:= Concatenation( hs, -hs );
h:= ch[3];
BH:= Basis( CartanSubalgebra(L), h );
eW:= Elements( WeylGroupAsPermGroup(R) );

inc:= function( h1, h2 )

        local v2, zh1, w, vgeq2, U, u;

        v2:= Subspace( V, V2( V, h1 ) );
        zh1:= BasisVectors( Basis( LieCentralizer( L, Subalgebra( L, [h1]))));
        for w in eW do
            vgeq2:= Subspace( V, Vgeq2( V, wh( BH, hs, w, h2 ) ) );
            U:= Intersection( v2, vgeq2 );
            if Dimension(U) > 0 then
               u:= Sum( Basis(U), x -> Random([-30..30])*x );
               if Subspace( V, List( zh1, x -> x^u)) = v2 then
                  return true;
               fi;
            fi;
        od;

        return false;

end;
\end{verbatim}

We now give a short example of the useage of this function.

\begin{verbatim}
gap> inc( chars[6], chars[9] ); time;
true
228
gap> inc( chars[6], chars[7] ); time;
false
3923358
\end{verbatim}

Here we see that the orbit with the sixth characteristic is contained in
the closure of the orbit with the ninth characteristic, but not in the
closure of the orbit with the seventh characteristic. The first computation
takes 0.2 seconds whereas the second computation takes 3923.3 seconds.
This is explained by the fact that for the second computation the entire
Weyl group is transversed, which has 46080 elements, whereas the
first computation is decided after considering just one element of the
Weyl group.

\subsection{Closure diagram and stabilizers}

By applying the implementation of the previous section we arrive at the
Hasse diagram in Figure \ref{fig:spin13} that displays the closure relation
of the orbits in the null cone.

\unitlength=1cm
\begin{figure}[htb]

\includegraphics{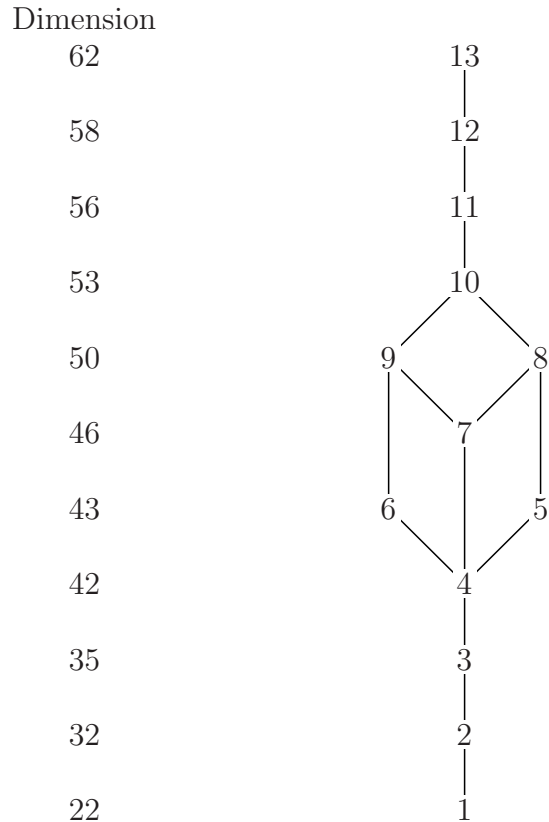}

\caption{Hasse diagram of the closures of the orbits of $\mathrm{Spin}_{13}$
  in the null cone}
\label{fig:spin13}
\end{figure}

Using Lemma \ref{lem:char1} it is straightforward to find representatives
of the orbits in the null cone. We illustrate this by an example:

\begin{verbatim}
gap> h:=chars[5];;
gap> v2:=V2( V, h );;       
gap> zh:= Basis( LieCentralizer( L, Subalgebra(L,[h])) );;      
gap> Length(v2);
32
gap> v:= v2[1]+v2[32];;
gap> Dimension( Subspace( V, List( zh, x -> x^v ) ) );
32
\end{verbatim}

This computation shows that the constructed element \verb1v1 is a
representative of the orbit corresponding to the fifth characteristic.
(We have found it by systematically trying sums of elements of \verb1v21;
here we do not go into that.)

Given an element $v\in V$ we can consider its stabilizer in $\g$:
$${\g}_{v} = \{ x\in \g \mid x\cdot v = 0\},$$
which is the Lie algebra of the stabilizer in $G$.
The {\sf SLA} package does not contain a function for computing this stabilizer,
but it is easily written:

\begin{verbatim}
stab:= function( v ) # v in V, we return its stabilizer in L

        local m, sol;

        m:= List( Basis(L), x -> Coefficients( Basis(V), x^v ) );
        sol:= NullspaceMat( m );
        return List( sol, x -> x*Basis(L) );
end;
\end{verbatim}

We then can use {\sf GAP} functionality to study the structure of the
stabilizer. We use the function \verb1LeviMalcevDecomposition1 which for a
Lie algebra \verb1K1 returns a list of two subalgebras. The first of these is
semisimple, the second is solvable and \verb1K1 is their semidirect sum.
In our example this goes as follows.

\begin{verbatim}
gap> K:= Subalgebra( L, stab(v) );;
gap> ld:=LeviMalcevDecomposition(K);;
gap> SemiSimpleType(ld[1]);  
"A4"
gap> Dimension(ld[2]);
11
\end{verbatim}

We see that the stabilizer is the semidirect product of a simple Lie algebra
of type $A_4$ and an 11-dimensional solvable ideal. By inspecting the basis
elements of this ideal it is easily seen that it is spanned by root vectors
corresponding to positive roots. Hence the ideal is unipotent. We indicate
this by saying that the stabilizer is of type $A_4\ltimes U_{11}$.
By doing similar computations for all 13 characteristics we arrive at Table
\ref{tab:stab}.

\begin{table}[htb]
  \begin{tabular}{|r|c|r|}
    \hline
    nr & dim & type of stabilizer\\
    \hline
    1 & 22 & $A_5\ltimes U_{21}$\\
    2 & 32 & $A_2+G_2\ltimes U_{24}$\\
    3 & 35 & $A_1+B_3\ltimes U_{19}$\\
    4 & 42 & $B_2+T_1\ltimes U_{25}$\\
    5 & 43 & $A_4\ltimes U_{11}$\\
    6 & 43 & $C_3\ltimes U_{14}$\\
    7 & 46 & $B_2\ltimes U_{22}$\\
    8 & 50 & $A_1+A_2\ltimes U_{17}$\\
    9 & 50 & $A_2\ltimes U_{20}$\\
    10 & 53 & $A_1+A_1\ltimes U_{19}$\\
    11 & 56 & $A_1+A_1\ltimes U_{16}$\\
    12 & 58 & $B_2\ltimes U_{10}$\\
    13 & 62 & $A_1\ltimes U_{13}$\\
    \hline
  \end{tabular}
  \caption{Stabilizers of the orbits in the null cone of the spinor
    representation of $\mathrm{Spin}_{13}$}\label{tab:stab}
\end{table}

We see that the sum of the dimension in the second column and the dimension
of the stabilizer is always $78=\dim\g$ (which should be the case as
$\dim \g_v + \dim \g\cdot v = \dim \g =78$).


\newcommand{\etalchar}[1]{$^{#1}$}
\def\cprime{$'$} \def\cprime{$'$} \def\Dbar{\leavevmode\lower.6ex\hbox to
  0pt{\hskip-.23ex \accent"16\hss}D} \def\cprime{$'$} \def\cprime{$'$}
  \def\cprime{$'$} \def\cprime{$'$} \def\cprime{$'$} \def\cprime{$'$}
\providecommand{\bysame}{\leavevmode\hbox to3em{\hrulefill}\thinspace}
\providecommand{\MR}{\relax\ifhmode\unskip\space\fi MR }
\providecommand{\MRhref}[2]{%
  \href{http://www.ams.org/mathscinet-getitem?mr=#1}{#2}
}
\providecommand{\href}[2]{#2}

\end{document}